\newtheorem{theorem}{Theorem}[section]
\newtheorem{lemma}[theorem]{Lemma}
\newtheorem{proposition}[theorem]{Proposition}
\theoremstyle{definition} 
\newtheorem{example}[theorem]{Example}
\numberwithin{equation}{section} 
\newcommand{\Mod}[1]{\ (\mathrm{mod}\ #1)}
\def\N{\operatorname{\mathbb{N}}}
\def\Q{\operatorname{\mathbb{Q}}}
\def\R{\operatorname{\mathbb{R}}}
\def\height{\operatorname{height}}
\def\fpt{\operatorname{fpt}}
\def\lct{\operatorname{lct}}
\def\pf{\operatorname{Pf}}
\def\cl{\operatorname{Cl}}
\begin{document}
\title{$F$-pure Threshold for the Symmetric Determinantal Ring} 
\author{Justin Fong}
\date{}
\address{Department of Mathematics, Purdue University, West Lafayette, IN
47907, USA}
\email{jafong1@gmail.com}

\let\thefootnote\relax

\begin{abstract}
We give a value for the $F$-pure threshold at the maximal homogeneous ideal $\mathfrak{m}$ of the symmetric determinantal ring over a field of prime characteristic. The answer is characteristic independent, so we immediately get the log canonical threshold in characteristic zero as well. 
\end{abstract} 

\maketitle

\section{Introduction}

The $F$-pure threshold, introduced by Takagi and Watanabe in \cite{MR2097584},
is a numerical invariant associated to rings with $F$-pure singularities. $F$-pure singularities are an example class of $F$-singularities, which are classes of singularities defined in positive characteristic using the Frobenius map. Suppose $R$ is a reduced $F$-finite $F$-pure ring of characteristic $p>0$, and let $\mathfrak{a}\subseteq R$ be a nonzero ideal. For a real number $t\in\R_{\ge0}$, the pair $(R,\mathfrak{a}^t)$ is called \emph{$F$-pure} if for every $q=p^e\gg0$, there exists an element $f\in \mathfrak{a}^{\lfloor (q-1)t \rfloor}$ (where $\lfloor (q-1)t \rfloor$ is the greatest integer less than or equal to the real number $(q-1)t$) such that the map $R\to R^{1/q}$ which sends 1 to $f^{1/q}$ splits as an $R$-module homomorphism. The \emph{$F$-pure threshold} of $\mathfrak{a}$ is defined to be
\[\fpt(\mathfrak{a}) := \sup\{t\in\R_{\ge0} \mid (R,\mathfrak{a}^t) \ \text{is $F$-pure}\}.\]
When $\mathfrak{a}=\mathfrak{m}$ is the maximal homogeneous ideal of $R$, we simply refer to $\fpt(\mathfrak{m})$ as the $F$-pure threshold of $R$. 

The $F$-pure threshold has been calculated for the determinantal rings of generic matrices \cite[Proposition 4.3]{MR3719471} and Hankel matrices \cite[Theorem 4.5]{MR3836659}. As for skew-symmetric matrices, the $F$-pure threshold of its determinantal ring, commonly called the Pfaffian ring, is easy to calculate: It is equal to the negative of its $a$-invariant by \cite[Theorem 5.2]{MR3778235}, since these rings are always Gorenstein \cite{MR0570242}. To state this explicitly, let $X=(x_{ij})$ be an $n\times n$ skew-symmetric matrix, $\Bbbk[X]$ be the polynomial ring in the entries of $X$, and $\pf_{2t}(X)$ be the ideal generated by the $2t$-Pfaffians of $X$. The Pfaffian ring is $R=\Bbbk[X]/\pf_{2t}(X)$, which is known to be strongly $F$-regular \cite{MR1888527}. The $a$-invariant of $R$ has been computed in \cite[Corollary 1.7]{MR1188581}. Assuming $R$ has the standard grading, one has $a(R)=-n(t-1)$ if $2t\le n$, while $R=\Bbbk[X]$ if $2t>n$ (see \cite[Corollary 4.3]{MR1902527}). 

To the author's knowledge, $\fpt(\mathfrak{m})$ has been unknown for the symmetric matrix case, where the calculations are more involved since symmetric determinantal rings are not always Gorenstein. We provide an answer in Theorem \ref{thm: fpt symm det ring}, as well as a proof that the symmetric determinantal ring is $F$-pure, a fact not clearly stated in the literature.

\subsection*{Note} 
At the time of writing this article, the author was notified of the recent paper \cite{CSV}, which includes a proof of the more general property of $F$-regularity for symmetric determinantal rings, and mentions the same conclusion of Proposition \ref{prop: second symbolic power} in this article. Our work is independent of that paper.

\subsection*{Acknowledgements}
I would like to thank Irena Swanson, Vaibhav Pandey, and my advisor Uli Walther for providing helpful discussions. Partial support by NSF Grant DMS-2100288 is gratefully acknowledged. 

\section{Symmetric Determinantial Rings}

Suppose that $X=(x_{ij})$ is a symmetric $n\times n$ generic matrix (i.e., $x_{ij}=x_{ji}$), let $\Bbbk[X]=\Bbbk[x_{ij}\mid1\le i\le j\le n]$ be the polynomial ring over a field $\Bbbk$ with the entries of $X$ as its variables, and $I_t(X)$ be the ideal generated by the $t$-minors of $X$. For an integer $1\le t\le n$ let $R=\Bbbk[X]/I_t(X)$ be the \emph{symmetric determinantal ring}. A formula for the $a$-invariant of $R$ has been given by Barile in \cite{MR1255876}. When $R$ is given the standard grading this is given as follows:
\begin{equation}\label{equ: symm det a-invariant}
    a(R) = \begin{cases}
              -(t-1)\frac{n}{2}, & \text{if $n\equiv t \Mod 2$} \\
              -(t-1)\frac{n+1}{2}, & \text{if $n\not\equiv t \Mod 2$}.
            \end{cases}
\end{equation}
In \cite{MR1309380} it is shown that the graded canonical module of $R$ is
\begin{equation}\label{equ: symm det graded canonical mod}
    \omega_R = \begin{cases}
              R\left(-(t-1)\frac{n}{2}\right), & \text{if $n\equiv t \Mod 2$} \\
              \mathfrak{p}\left(-(t-1)\frac{n-1}{2}\right), & \text{if $n\not\equiv t \Mod 2$},
               \end{cases}
\end{equation}
where $\mathfrak{p}$ is the prime ideal generated by the $t-1$ minors of the first $t-1$ rows of $X$, which generates the divisor class group of $R$ (see \cite{MR0435063}). While $R$ is Gorenstein follows from (\ref{equ: symm det graded canonical mod}), this was first proven by Goto in \cite{MR0545715}. 

\begin{lemma}\cite[(4.2)]{MR4603914}\label{lem: sym power in colon ideal}
Suppose $S$ is a polynomial ring over a field of characteristic $p>0$, and let $I\subset S$ be a prime ideal of $h=\height(I)$. Then $I^{(h(p-1))}\subseteq(I^{[p]}:_SI)$, i.e., $I^{(h(p-1))+1}\subseteq I^{[p]}$.
\end{lemma}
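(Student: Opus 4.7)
The plan is to localize at $I$ and reduce to a pigeonhole computation in the regular local ring $S_I$. I will aim for the stronger symbolic-power containment $I^{(h(p-1)+1)} \subseteq I^{[p]}$; the colon formulation $I^{(h(p-1))} \subseteq (I^{[p]}:_S I)$ then follows immediately from $I \cdot I^{(h(p-1))} \subseteq I^{(h(p-1)+1)}$.

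Since $I$ has height $h$, $S_I$ is a regular local ring of dimension $h$ with maximal ideal $\mathfrak{m} = IS_I$, and we may choose a regular system of parameters $x_1, \ldots, x_h$ for $\mathfrak{m}$. In characteristic $p$ the freshman's dream yields $\mathfrak{m}^{[p]} = (x_1^p, \ldots, x_h^p)$. The power $\mathfrak{m}^{h(p-1)+1}$ is generated as an $S_I$-module by the monomials $x_1^{a_1} \cdots x_h^{a_h}$ with $a_1 + \cdots + a_h = h(p-1)+1$, and the pigeonhole principle forces some $a_i \geq p$, placing each such monomial in $\mathfrak{m}^{[p]}$. Thus $\mathfrak{m}^{h(p-1)+1} \subseteq I^{[p]} S_I$. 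By the definition of symbolic powers, $I^{(h(p-1)+1)}$ localizes to $\mathfrak{m}^{h(p-1)+1}$, so for every $f \in I^{(h(p-1)+1)}$ there is some $s \in S \setminus I$ with $sf \in I^{[p]}$.

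To conclude $f \in I^{[p]}$ from $sf \in I^{[p]}$ and $s \notin I$, I need $I^{[p]}$ to be $I$-primary in $S$; this is the main obstacle. The radical $\sqrt{I^{[p]}} = I$ is immediate from $I^p \subseteq I^{[p]} \subseteq I$, so what remains is to rule out embedded associated primes strictly containing $I$. This is a standard consequence of Kunz's theorem: the Frobenius endomorphism of the regular ring $S$ is flat, and under the resulting flat base change the associated primes of $S/I$ are preserved in passing to $S/I^{[p]}$, giving $\mathrm{Ass}_S(S/I^{[p]}) = \mathrm{Ass}_S(S/I) = \{I\}$. With $I$-primariness in hand, $s$ is a non-zerodivisor on $S/I^{[p]}$, so $f \in I^{[p]}$ and the argument closes.
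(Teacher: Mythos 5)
Your proof is correct and takes essentially the same route as the paper's: localize at $I$, apply the pigeonhole principle to a regular system of parameters in $S_I$, and invoke Kunz's flatness of Frobenius to show $\mathrm{Ass}_S(S/I^{[p]}) = \{I\}$ so that the containment can be pulled back from the localization.
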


\begin{proof}
To show the containment $I^{(h(p-1))+1}\subseteq I^{[p]}$, it is enough to show it locally at all the associated primes of $S/I^{[p]}$. The Frobenius map on $S$ is flat, the set of associated primes of $S/I^{[p]}$ is the same as for $S/I$, which consists only of $I$ since it is prime. In the regular local ring $(S_I,IS_I)$ the symbolic power of $IS_I$ coincides with its ordinary power since $IS_I$ is a maximal ideal. Hence it follows that
\[(IS_I)^{(h(p-1))}I=(IS_I)^{h(p-1)}I=(IS_I)^{h(p-1)+1}.\]
Since $(S_I,IS_I)$ has dimension $h=\height(I)$, its unique maximal ideal $IS_I$ is generated by $h$ elements $g_1,\dots,g_h$. The power $(IS_I)^{h(p-1)+1}$ is generated by $g_1^{a_i}\cdots g_h^{a_h}$ such that $a_1+\cdots+a_h=h(p-1)+1$. By the pigeonhole principle, at least one $a_i\ge p$, hence it follows that $(IS_I)^{h(p-1)+1}\subseteq I^{[p]}S_I$.
\end{proof}

\begin{proposition}
Over a field of characteristic $p>0$ the symmetric determinantal ring $R$ is $F$-pure.
\end{proposition}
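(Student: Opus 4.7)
The plan is to apply Fedder's criterion. Writing $S=\Bbbk[X]$ for the ambient polynomial ring with homogeneous maximal ideal $\mathfrak{m}$, and setting $I=I_t(X)$, Fedder's criterion states that $R=S/I$ is $F$-pure if and only if $(I^{[p]}:_S I)\not\subseteq \mathfrak{m}^{[p]}$. Classically $I=I_t(X)$ is prime of height $h=\binom{n-t+2}{2}$ in the symmetric case, so Lemma~\ref{lem: sym power in colon ideal} gives
\[ I^{(h(p-1))}\subseteq (I^{[p]}:_S I). \]
Thus it suffices to exhibit an element $f\in I^{(h(p-1))}$ with $f\notin\mathfrak{m}^{[p]}$, i.e.\ one whose expansion in the $x_{ij}$ contains at least one monomial in which every variable appears to exponent strictly less than $p$.

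The construction of $f$ will rest on the classical fact that every $s\times s$ minor $\delta$ of $X$ with $s\geq t$ lies in the symbolic power $I^{(s-t+1)}$. I would verify this by localizing at the prime $I$: in the regular local ring $(S_I,IS_I)$ some $(t-1)$-minor of $X$ is a unit, and a matrix normalization then writes any $s\times s$ minor as, up to units, an $(s-t+1)\times(s-t+1)$ subdeterminant of a block of $X$ whose entries generate $IS_I$. Granting this, $f$ can be taken as a product of minors $\delta_1,\dots,\delta_r$ of sizes $s_1,\ldots,s_r\geq t$ with $\sum_i(s_i-t+1)\geq h(p-1)$, which automatically places $f$ in the required symbolic power.

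The main obstacle is choosing these minors so that the expansion of $f$ admits a monomial with all exponents bounded by $p-1$. A pigeonhole count shows one cannot always restrict to $t\times t$ minors: when $n=4$ and $t=2$, for example, a product of $h(p-1)=6(p-1)$ quadratic minors has total degree $12(p-1)$ in only $10$ variables, so some variable must appear with exponent at least $p$ in every monomial. Minors of larger sizes are therefore necessary, and their row/column index sets must be selected so that distinguished "diagonal" terms of the factors multiply to a single monomial of $f$ in which no variable occurs more than $p-1$ times. Packaging this combinatorial choice --- for instance via a symmetric analogue of standard monomial theory, or through an inductive construction using nested principal submatrices of $X$ --- is where the bulk of the work will go.
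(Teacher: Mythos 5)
Your strategy is the right one and matches the paper's: apply Fedder's criterion, reduce via Lemma~\ref{lem: sym power in colon ideal} to exhibiting an element of $I_t^{(h(p-1))}$ outside $\mathfrak{m}^{[p]}$, and use the characterization of symbolic powers of determinantal ideals in terms of products of minors. You also correctly identify the essential obstruction --- a pigeonhole count shows that products of $t$-minors alone cannot avoid $\mathfrak{m}^{[p]}$, so minors of varying sizes are needed and their leading terms must be coordinated.

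But this is precisely where your proposal stops, and that missing piece is the actual content of the proof. Asserting that a suitable product of minors exists \emph{via} ``a symmetric analogue of standard monomial theory, or through an inductive construction using nested principal submatrices'' is naming the problem, not solving it. The paper's construction is concrete and you should be aware it is quite clean: take $\mathfrak{D}=d_1\cdots d_n$, where $d_i$ is the $i$-minor of the $i\times i$ submatrix sitting in the \emph{bottom-left} corner of $X$; these $n$ minors have (anti)diagonals running along the $n$ distinct diagonals of the symmetric matrix $X$, so with respect to the diagonal term order $\mathrm{in}(\mathfrak{D})=\prod_{i\le j}x_{ij}$ is exactly the squarefree product of all variables, forcing $\mathfrak{D}^{p-1}\notin\mathfrak{m}^{[p]}$. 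The shape of $\mathfrak{D}$ is $(1,2,\dots,n)$, and a direct computation gives $\gamma_t(\mathfrak{D})=\sum_{i=t}^n(i-t+1)=h$, so $\mathfrak{D}\in I_t^{(h)}$ and $f=\mathfrak{D}^{p-1}$ does the job. Two further small points: the paper verifies membership $\mathfrak{D}\in I_t^{(h)}$ using the combinatorial criterion $\gamma_t(\Delta)\ge k$ from \cite[Proposition 4.3]{MR3335294}, rather than your sketched localization argument for individual minors (the latter is plausible but you would need to be careful that the ``up to units'' normalization works as stated in the symmetric setting); and bottom-left, not top-left, submatrices are used --- ``nested principal submatrices'' of $X$ would not produce a squarefree leading monomial. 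As written, your proposal is an accurate plan with the central construction left open, so it does not yet constitute a proof.
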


\begin{proof}
Let $I_t=I_t(X)$, and $S=\Bbbk[X]$ be the polynomial ring in $n(n+1)/2$ many variables. We apply Fedder's criterion \cite{MR701505}: $R$ is $F$-pure if and only if $(I_t^{[p]}:_SI_t)\nsubseteq\mathfrak{m}_S^{[p]}$, where $\mathfrak{m}_S=(x_{ij})$ is the maximal homogeneous ideal of $S$. Thus, in order to show that $R$ is $F$-pure, we need to find an element $f\in (I_t^{[p]}:_SI_t)\setminus\mathfrak{m}_S^{[p]}$. 

Let $\mathfrak{D}=d_1\cdots d_n$ be a product of $i$-minors $d_i$ of the $i\times i$ submatrices in the bottom-left corner of $X$ (for $i=1,\dots,n$), whose diagonals are the $n$ distinct diagonals of $X$ (see Example \ref{ex: 3x3 symm matrix} below). Our proposed element is $f=\mathfrak{D}^{p-1}$. The leading term of $\mathfrak{D}$ with respect to the lexicographical ordering
\[x_{11}>x_{12}>\cdots>x_{1n}>x_{21}>\cdots>x_{n-1n}>x_{nn}\]
is $\mathrm{in}(\mathfrak{D})=\prod_{1\le i\le j\le n}x_{ij}$, which is the product of all the distinct variables of $X$, hence it is a squarefree monomial. This implies $\mathfrak{D}^{p-1}\notin\mathfrak{m}_S^{[p]}$.

Next, we need to show $\mathfrak{D}^{p-1}I_t\subseteq I_t^{[p]}$, i.e., $\mathfrak{D}^{p-1}\in(I_t^{[p]}:_SI_t)$. It suffices to prove this containment after localizing at the prime ideal $I_t$. 
If one can show that $\mathfrak{D}$ belongs to the symbolic power $I_t^{(h)}$, where $h=\height(I_t)$, then it would follow
\[\mathfrak{D}^{p-1}I_tS_{I_t}\subseteq \left((I_tS_{I_t})^{(h)}\right)^{p-1}I_tS_{I_t} = (I_tS_{I_t})^{h(p-1)+1}\subseteq I_t^{[p]}S_{I_t},\] 
where $(I_tS_{I_t})^{(h)}=(I_tS_{I_t})^{h}$, because $I_tS_{I_t}$ is the (unique) maximal ideal of $S_{I_t}$,
and the last containment follows from Lemma \ref{lem: sym power in colon ideal}. We now show $\mathfrak{D}\in I_t^{(h)}$. In \cite[Theorem 1]{MR0352082}, Kutz has shown that $\dim R = n(t-1)-\frac{1}{2}(t-1)(t-2)$, hence it follows
\[h=\dim S-\dim R = \frac{n(n+1)}{2}-n(t-1)+\frac{(t-1)(t-2)}{2}.\]
In general, suppose $\Delta$ is a product $\delta_1\cdots\delta_u$ of $a_i$-minors $\delta_i$ of $X$, where the vector $(a_1,\dots,a_u)\in\N^u$ is called the \emph{shape} of $\Delta$. With the previous notation, the symbolic power $I_t^{(k)}$ is described by
\[\left[\Delta\in I_t^{(k)} \right] \ \Longleftrightarrow \ \left[ \text{$a_i\le n \enspace \forall i$ and}\enspace \gamma_t(\Delta):=\sum_{i=1}^u\max\{0, a_i-t+1\}\ge k \right]\]
(see \cite[Proposition 4.3]{MR3335294}.) We must show $\gamma_t(\mathfrak{D})\ge h$. Observe that the product $\mathfrak{D}=d_1d_2\cdots d_n$ has shape $(1,2,\dots,n)$, and note that $i-t+1>0 \Leftrightarrow i>t-1$, hence $\max\{0, i-t+1\}=i-t+1$ for $t\le i\le n$. Thus, one has
\begin{align*}
    \gamma_t(\mathfrak{D}) &= \sum_{i=t}^n(i-t+1) \\
    &= \sum_{i=t}^ni + \sum_{i=t}^n(1-t) \\
    &= \frac{n(n+1)}{2} - \frac{t(t-1)}{2} + (n-t+1)(1-t) \\
    &= \frac{n(n+1)}{2}-n(t-1)+\frac{(t-1)(t-2)}{2},
\end{align*}
i.e., $\gamma_t(\mathfrak{D})=\height(I_t)=h$, so it follows that $\mathfrak{D}\in I_t^{(h)}$. 
\end{proof}

\begin{example}\label{ex: 3x3 symm matrix}
For the $3\times3$ symmetric matrix
\[X = \begin{pmatrix}
    x_{11} & x_{12} & x_{13} \\
    x_{12} & x_{22} & x_{23} \\
    x_{13} & x_{23} & x_{33} 
\end{pmatrix}\]
the $i$-minors $d_i$ ($i=1,2,3$) of the $i\times i$ botom-left corner of $X$ are 
\[d_1=x_{13}, \quad d_2=\begin{vmatrix} x_{12} & x_{22} \\ x_{13} & x_{23} \end{vmatrix}, \quad d_3=|X|,\]
which have leading terms $\mathrm{in}(d_1)=x_{13},\ \mathrm{in}(d_2)=x_{12}x_{23}$, and $\mathrm{in}(d_3)=x_{11}x_{22}x_{33}$, hence $\mathfrak{D}=d_1d_2d_3$ has squarefree leading term $x_{11}x_{12}x_{13}x_{22}x_{23}x_{33}$.
\end{example}

\begin{lemma}\cite[Lemma 1.71]{Eloisa}\label{lem: primary ideal}
Let $Q$ be a primary ideal of a Noetherian ring $R$. If $P$ is a prime ideal that contains $Q$, then $QR_P\cap R=Q$. 
\end{lemma}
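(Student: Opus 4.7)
The plan is to verify the two containments of $QR_P\cap R = Q$ separately. The forward inclusion $Q\subseteq QR_P\cap R$ is immediate from the definition of the contraction: any $x\in Q$ maps to $x/1\in QR_P$ under the localization map $R\to R_P$, and so $x\in QR_P\cap R$.

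For the reverse inclusion, I would first unpack what it means for an element $x\in R$ to lie in $QR_P\cap R$. By the usual description of the extension of an ideal to a localization, $x/1\in QR_P$ iff $x/1 = q/s$ in $R_P$ for some $q\in Q$ and some $s\in R\setminus P$. Clearing denominators in $R_P$, this is in turn equivalent to the existence of $u\in R\setminus P$ with $ux\in Q$. This standard characterization is the only bookkeeping needed before the main idea.

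Now fix such $u$. Because $Q\subseteq P$ and $P$ is prime, we have $\sqrt{Q}\subseteq P$ (any prime containing $Q$ contains $\sqrt{Q}$). Therefore $u\notin P$ forces $u\notin\sqrt{Q}$. Apply the defining property of a primary ideal: whenever $ab\in Q$ and $a\notin\sqrt{Q}$, one has $b\in Q$. Taking $a=u$ and $b=x$ yields $x\in Q$, which completes the reverse containment.

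There is no real obstacle here; the argument is entirely routine. The only point that requires a small amount of care is the characterization of $QR_P\cap R$ in terms of the existence of a witness $u\in R\setminus P$ with $ux\in Q$, but this is a standard fact about extension and contraction of ideals along a localization and requires nothing beyond the definitions.
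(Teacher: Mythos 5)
Your proof is correct, and it is the standard argument. Note that the paper itself does not prove this lemma: it is cited from \cite{Eloisa} without a reproduction of the proof, so there is no in-paper argument to compare against. Your reduction of $x\in QR_P\cap R$ to the existence of $u\in R\setminus P$ with $ux\in Q$ is exactly the right bookkeeping, and the two facts you then combine --- that $\sqrt{Q}\subseteq P$ because $P$ is a prime containing $Q$, and that $ux\in Q$ with $u\notin\sqrt{Q}$ forces $x\in Q$ by primariness --- finish it cleanly. One small observation: the Noetherian hypothesis in the statement plays no role in your proof (or in the fact itself); it is presumably present in the source only because that is the ambient standing assumption there.
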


\begin{proposition}\label{prop: second symbolic power}
Over a field $\Bbbk$ of arbitrary characteristic, let $\mathfrak{p}$ be the prime ideal of $R=\Bbbk[X]/I_t(X)$ generated by the $t-1$ minors of the first $t-1$ rows of $X$. Then the second symbolic power $\mathfrak{p}^{(2)}$ is principally generated by the $(t-1)$-minor in the top left-hand corner of $X$.
\end{proposition}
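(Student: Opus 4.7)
The plan is to establish the two inclusions separately: $\Delta \in \mathfrak{p}^{(2)}$ via a Jacobi-type matrix identity, and $\mathfrak{p}^{(2)} \subseteq (\Delta)$ via the principality of $\mathfrak{p}^{(2)}$ (already known from the class group) combined with degree bookkeeping.

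For $\Delta \in \mathfrak{p}^{(2)}$, I would consider the top-left $t \times t$ principal submatrix $M = X[\{1,\ldots,t\},\{1,\ldots,t\}]$. Since $\det M \in I_t(X)$, we have $\det M = 0$ in $R$; by Jacobi's complementary-minor identity, every $2 \times 2$ minor of $\mathrm{adj}(M)$ is a multiple of $\det M$ and thus vanishes in $R$. Writing out the $2$-minor of $\mathrm{adj}(M)$ on rows and columns $\{1,t\}$ and using $\mathrm{adj}(M)_{tt} = \Delta$, one obtains
\[
\mathrm{adj}(M)_{11} \cdot \Delta \;=\; \mathrm{adj}(M)_{1t} \cdot \mathrm{adj}(M)_{t1} \quad \text{in } R.
\]
Up to sign, $\mathrm{adj}(M)_{1t}$ is the $(t-1)$-minor of $X$ on rows $\{1,\ldots,t-1\}$ and columns $\{2,\ldots,t\}$, a generator of $\mathfrak{p}$; the companion factor $\mathrm{adj}(M)_{t1}$ is the $(t-1)$-minor on rows $\{2,\ldots,t\}$ and columns $\{1,\ldots,t-1\}$, which by the symmetry $x_{ij} = x_{ji}$ equals the same generator (up to sign). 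Hence the right-hand side lies in $\mathfrak{p}^2$. It remains to verify that $\mathrm{adj}(M)_{11} = \det X[\{2,\ldots,t\},\{2,\ldots,t\}]$ does not lie in $\mathfrak{p}$; I would do this by evaluating at the rank-$(t-1)$ symmetric matrix $\sum_{i=2}^{t} e_i e_i^T$, which belongs to $V(\mathfrak{p}) \cap V(I_t(X))$ and at which the minor in question equals $1$. The displayed identity then witnesses $\Delta \in \mathfrak{p}^{(2)}$.

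For the reverse inclusion, I would invoke Goto's theorem that the divisor class group of $R$ is $\Z/2\Z$ generated by $[\mathfrak{p}]$, so that $2[\mathfrak{p}] = 0$ and the divisorial ideal $\mathfrak{p}^{(2)}$ is principal. Since $\mathfrak{p}$ is homogeneous, $\mathfrak{p}^{(2)}$ is a graded principal ideal and therefore admits a homogeneous generator $g$ of some degree $d$. Two degree estimates pin $d$ down: the containment $\mathfrak{p}^{(2)} \subseteq \mathfrak{p}$ together with the fact that $\mathfrak{p}$ is generated in degree $t-1$ gives $d \geq t-1$, while the first step furnished $\Delta \in \mathfrak{p}^{(2)}$ with $\deg \Delta = t-1$, giving $d \leq t-1$. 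So $d = t-1$, and $(\mathfrak{p}^{(2)})_{t-1} = \Bbbk g$ is one-dimensional. Since $\Delta \neq 0$ in $R$ (because $I_t(X)$ has no elements of degree $t-1$), $\Delta$ is a nonzero scalar multiple of $g$, yielding $\mathfrak{p}^{(2)} = (\Delta)$.

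The subtle point I expect is the non-membership $\mathrm{adj}(M)_{11} \notin \mathfrak{p}$, which I would settle by the explicit test point above; the remainder is formal—Jacobi's identity, the symmetry of $X$, and graded bookkeeping enabled by the class-group input. Without the principality of $\mathfrak{p}^{(2)}$ the degree argument would not close up, so Goto's class-group description is the crucial external ingredient.
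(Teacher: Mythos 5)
Your argument is correct, but it takes a genuinely different route from the paper's. The paper's proof leans on Goto's two local statements about $\mathfrak{p}$: Proposition~(1) of \cite{MR0435063}, that $\sqrt{(d)}=\mathfrak{p}$ for $d$ the top-left $(t-1)$-minor, and Proposition~(2), that $R_{\mathfrak{p}}$ is a DVR with $v_{\mathfrak{p}}(d)=2$. Combined with the Cohen--Macaulayness of $R$ (to see $(d)$ is $\mathfrak{p}$-primary) and the contraction lemma for primary ideals, this gives $(d) = (d)R_{\mathfrak{p}}\cap R = \mathfrak{p}^2R_{\mathfrak{p}}\cap R = \mathfrak{p}^{(2)}$ in one stroke, never touching the class group. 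You instead exhibit $\Delta\in\mathfrak{p}^{(2)}$ directly via Jacobi's complementary-minor identity (using that $\det M$ is a $t$-minor and hence zero in $R$, and that the symmetry of $X$ makes the two off-diagonal cofactors coincide), and you certify the localizing unit $\mathrm{adj}(M)_{11}\notin\mathfrak{p}$ by evaluating at a rank-$(t-1)$ diagonal point; the reverse inclusion then comes from Goto's class group $\cl(R)\cong\Z/2\Z$ to get principality of $\mathfrak{p}^{(2)}$, plus graded degree bookkeeping. Both proofs ultimately trade on Goto's analysis of $\mathfrak{p}$; yours substitutes an explicit algebraic identity for the valuation fact $v_{\mathfrak{p}}(d)=2$, which is appealing and more constructive, but pays for it by invoking the class-group computation (and, implicitly, normality of $R$), which is arguably the heavier input. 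The Jacobi identity computation you carry out is, morally, a hands-on derivation of the same multiplicity-two statement that Goto's Proposition~(2) encodes.

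One small point worth making explicit in your write-up: the step ``$2[\mathfrak{p}]=0$ implies $\mathfrak{p}^{(2)}$ is principal'' quietly uses that $R$ is a normal domain and that $\mathfrak{p}$ has height one, so that $\mathfrak{p}^{(2)}$ is a divisorial ideal whose class is $2[\mathfrak{p}]$; both facts are true here but deserve a mention. Also, the test-point argument correctly works over any field (it only needs that elements of $\mathfrak{p}$ vanish at a given $\Bbbk$-point of $V(\mathfrak{p})$, not the Nullstellensatz), which matches the statement's ``arbitrary characteristic.''
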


\begin{proof}
Let $d$ represent the $(t-1)$-minor in the top left corner of $X$. By Proposition (1) of \cite{MR0435063} the radical of the principal ideal $(d)=dR$ is the prime ideal $\mathfrak{p}$. To see that $(d)$ is primary note that $R$ is a Cohen-Macaulay domain by \cite[Theorem 1]{MR0352082}, and $d\in R$ is a nonzero divisor, hence $R/(d)$ is Cohen-Macaulay, so all associated primes of $(d)$ are minimal. Since $\sqrt{(d)}=\mathfrak{p}$ is prime, it follows that $(d)$ is $\mathfrak{p}$-primary. 

By Proposition (2) of \cite{MR0435063}, the local ring $R_{\mathfrak{p}}$ is a discrete valuation ring with valuation $v_{\mathfrak{p}}$, and $v_{\mathfrak{p}}(d)=2$. Let $(d)R_{\mathfrak{p}}$ be the extension of $(d)$ in $R$ to $R_{\mathfrak{p}}$. This is a nonzero proper ideal in $R_{\mathfrak{p}}$ by the valuation of $d$. Since every proper nontrivial ideal of the DVR $R_{\mathfrak{p}}$ is a power of the unique maximal ideal $\mathfrak{p}R_{\mathfrak{p}}$, we have $(d)R_{\mathfrak{p}}=\mathfrak{p}^mR_{\mathfrak{p}}=\{a\in R_{\mathfrak{p}}\mid v_{\mathfrak{p}}(a)\ge m\}$ for some $m\in\N$. From $v_{\mathfrak{p}}(d)=2$, it follows that $(d)R_{\mathfrak{p}}\subseteq\mathfrak{p}^2R_{\mathfrak{p}}$. Also, $d\in\mathfrak{p}^mR_{\mathfrak{p}}$ implies $2=v_{\mathfrak{p}}(d)\ge m$, so $m$ is either 1 or 2. However, $\mathfrak{p}^mR_{\mathfrak{p}}\subseteq\mathfrak{p}^2R_{\mathfrak{p}}$ can only mean $m=2$, so it follows $(d)R_{\mathfrak{p}}=\mathfrak{p}^2R_{\mathfrak{p}}$.

Since $d\in\mathfrak{p}$, one has $(d)\subseteq\mathfrak{p}$, and it follows from Lemma \ref{lem: primary ideal} and our previous discussion that $(d)=(d)R_{\mathfrak{p}}\cap R=\mathfrak{p}^2R_{\mathfrak{p}}\cap R = \mathfrak{p}^{(2)}$.
\end{proof}

\begin{theorem}\label{thm: fpt symm det ring}
The $F$-pure threshold of the symmetric determinantial ring $R=\Bbbk[X]/I_t(X)$ over an $F$-finite field $\Bbbk$ is 
\[ \fpt(\mathfrak{m}) = \frac{n(t-1)}{2}.\]
When $\Bbbk$ has characteristic zero, the log canonical threshold of $\mathfrak{m}$ shares the same value.
\end{theorem}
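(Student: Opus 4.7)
The strategy is to sandwich $\fpt(\mathfrak{m})$ between two matching bounds at $n(t-1)/2$; the characteristic-zero statement then follows from $p$-independence via the standard reduction mod $p$ dictionary (Hara--Watanabe).

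For the upper bound, I split by the parity of $n-t$. When $n \equiv t \Mod 2$, equation (\ref{equ: symm det a-invariant}) gives $a(R) = -n(t-1)/2$, and the standard inequality $\fpt(\mathfrak{m}) \le -a(R)$ for F-pure standard graded Gorenstein rings immediately yields the desired bound. When $n \not\equiv t \Mod 2$, the a-invariant alone gives only $(t-1)(n+1)/2$, overshooting by $(t-1)/2$. To tighten, I would exploit the description $\omega_R = \mathfrak{p}(-(t-1)(n-1)/2)$ from (\ref{equ: symm det graded canonical mod}) together with Proposition \ref{prop: second symbolic power}: because $\mathfrak{p}^{(2)} = (d)$ for the top-left $(t-1)$-minor $d$ of degree $t-1$, a graded analysis of the Frobenius-trace maps $F_*^e R \to R$ (which are governed by $\omega_R^{(1-q)}$) forces the allowable degrees of Frobenius splitting witnesses for $(R, \mathfrak{m}^\lambda)$ to shift down by $(q-1)(t-1)/2$ relative to the a-invariant bound; this ``half-shift'' is exactly $\deg(d)/2$, yielding $\fpt_R(\mathfrak{m}) \le n(t-1)/2$.

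For the lower bound, apply Fedder's criterion for pairs: $(R, \mathfrak{m}^\lambda)$ is F-pure iff $\mathfrak{m}_S^{\lceil (q-1)\lambda \rceil}(I_t^{[q]}:_S I_t) \not\subseteq \mathfrak{m}_S^{[q]}$ for every $q = p^e \gg 0$. Building on the F-purity proof, decompose $\mathfrak{D} = (\prod_{i<t} d_i)\cdot \mathfrak{E}$ with $\mathfrak{E} = \prod_{i=t}^n d_i$; then $\gamma_t(\mathfrak{E}) = h$, so $\mathfrak{E}^{q-1} \in I_t^{(h(q-1))} \subseteq (I_t^{[q]}:_S I_t)$ by Lemma \ref{lem: sym power in colon ideal}. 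The leading term of $\mathfrak{E}^{q-1}$ saturates only the variables present in the leading terms of $d_t,\dots,d_n$, leaving those appearing in the leading terms of $d_1,\dots,d_{t-1}$ free up to exponent $q-1$. Multiplying by a carefully chosen monomial $f \in \mathfrak{m}_S^D$ that fills these slack positions produces an element of $\mathfrak{m}_S^D(I_t^{[q]}:_S I_t)\setminus \mathfrak{m}_S^{[q]}$. To reach $D = \lfloor (q-1) n(t-1)/2 \rfloor$, combine the leading-term argument with non-leading monomials of $\mathfrak{E}^{q-1}$ (or symmetric variants of the minor product), invoking the combinatorial identity linking the height $h = \tfrac{n(n+1)}{2} - n(t-1) + \tfrac{(t-1)(t-2)}{2}$ to the optimal slack count.

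The main obstacle I expect is the combinatorial construction in the lower bound: explicitly producing, for each $q$, elements $f \in \mathfrak{m}_S^{\lfloor(q-1)n(t-1)/2\rfloor}$ and $g \in (I_t^{[q]}:_S I_t)$ with $fg \notin \mathfrak{m}_S^{[q]}$. The non-Gorenstein upper bound is also delicate, as it requires translating the class-group data $\mathfrak{p}^{(2)} = (d)$ into a sharp obstruction on Frobenius-splitting degrees. Once both bounds match, the $p$-independence of the answer and reduction mod $p$ deliver the characteristic-zero log canonical threshold.
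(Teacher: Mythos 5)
Your strategy---sandwiching $\fpt(\mathfrak{m})$ between a Fedder-pair lower bound and a canonical-module upper bound---is genuinely different from the paper's route, and as written both halves of your sandwich are left open. The paper avoids a sandwich entirely. In the Gorenstein case $n\equiv t\Mod 2$ it invokes the \emph{equality} $\fpt(\mathfrak{m})=-a(R)$ from \cite[Theorem 5.2]{MR3778235} (not merely the inequality you use), which already finishes that case. In the non-Gorenstein case it observes that $\cl(R)=\Z/2\Z$ makes $R$ $\Q$-Gorenstein of index $2$, and then applies the exact formula $\fpt(\mathfrak{m})=D/c$ of \cite[Proposition 4.5]{MR3719471}, where $c$ is the index and $D$ is the generating degree of $\omega_R^{(c)}$. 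The only new computation required is $\omega_R^{(2)}$: by (\ref{equ: symm det graded canonical mod}) and Proposition \ref{prop: second symbolic power}, $\mathfrak{p}^{(2)}\cong R(-(t-1))$ implies $\omega_R^{(2)}=R(-(t-1)n)$, so $D=(t-1)n$ and $\fpt(\mathfrak{m})=n(t-1)/2$.

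The gap in your proposal is concrete. For the non-Gorenstein upper bound, the ``half-shift'' argument is only a heuristic: you assert that the trace maps $F_*^eR\to R$ are governed by $\omega_R^{(1-q)}$ and that this shifts the admissible splitting degree down by exactly $(q-1)(t-1)/2$, but nothing in the sketch explains why the correction equals $\deg(d)/2$ and not some other quantity tied to $\mathfrak{p}^{(1-q)}$; making this precise is essentially re-proving the $D/c$ formula for this ring. For the lower bound you openly acknowledge that the required construction of $f\in\mathfrak{m}_S^{\lfloor(q-1)n(t-1)/2\rfloor}$ and $g\in(I_t^{[q]}:_S I_t)$ with $fg\notin\mathfrak{m}_S^{[q]}$ is unfinished; the suggestion to use non-leading monomials of $\mathfrak{E}^{q-1}$ or ``symmetric variants of the minor product'' is not developed, and the leading-term bookkeeping you propose would first need a careful count showing the slack actually reaches the stated degree. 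You do correctly identify Proposition \ref{prop: second symbolic power} as the key new input, and you are right that the $a$-invariant alone overshoots by $(t-1)/2$ when $n\not\equiv t\Mod 2$; the economical move the paper makes is to feed that observation into the already-proved $\Q$-Gorenstein fpt formula rather than rebuilding it.
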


\begin{proof}
The ring $R=\Bbbk[X]/I_t$ is Gorenstein if and only if $n\equiv t \Mod 2$, hence it follows from \cite[Theorem 5.2]{MR3778235} and equation (\ref{equ: symm det a-invariant}) that $\fpt(\mathfrak{m})=-a(R)=(t-1)\frac{n}{2}$, when $n\equiv t \Mod 2$. Now, suppose $R$ is non-Gorenstein, i.e., $n\not\equiv t \Mod 2$. Its $a$-invariant and graded canonical module are respectively $a(R)=-(t-1)\frac{n+1}{2}$ and $\omega_R=\mathfrak{p}\left(-(t-1)\frac{n-1}{2}\right)$ (see (\ref{equ: symm det graded canonical mod})). Since the class group $\cl(R)$ is cyclic of order $c=2$, the ring $R$ is $\Q$-Gorenstein, so we apply \cite[Proposition 4.5]{MR3719471}. We determine the generating degree $D$ of $\omega_R^{(2)}$. First, note that $\mathfrak{p}^{(2)}$ is principally generated by a $(t-1)$-minor, by Proposition \ref{prop: second symbolic power}, hence as graded modules $\mathfrak{p}^{(2)}\cong R(-(t-1))$. The second symbolic power of $\omega_R$ is then
\begin{align*}
    \omega_R^{(2)} = \left(\mathfrak{p}\left(\frac{-(t-1)(n-1)}{2}\right)\right)^{(2)} 
    &= \mathfrak{p}^{(2)}\left(2\cdot\frac{-(t-1)(n-1)}{2}\right) \\
    &= R\big(-(t-1)\big)\big(-(t-1)(n-1)\big) \\
    &= R\big(-(t-1)-(t-1)(n-1)\big) \\
    &= R\left(-(t-1)n\right),
\end{align*}
which is generated in degree $D=(t-1)n$. It follows that $\fpt(\mathfrak{m})=D/c=n(t-1)/2$. 

In characteristic zero, one has by \cite[Theorem 3.4]{MR2185754} that the log canonical threshold of $R$ is 
\[\lct(\mathfrak{m})=\lim_{p\to\infty}\fpt(\mathfrak{m}_{R_p})=\frac{n(t-1)}{2},\]
where $\mathfrak{m}_{R_p}$ is the image of $\mathfrak{m}$ in $R_p$, the prime reduction modulo $p$ of $R$.
\end{proof}

$\,$

$\,$

\bibliographystyle{alpha}
\bibliography{bibliography.bib}

\end{document}